\providecommand{\algorithmname}{Algorithm}
\numberwithin{equation}{section}
\numberwithin{figure}{section}
\theoremstyle{plain}
\newtheorem{thm}{\protect\theoremname}[section]
  \theoremstyle{definition}
  \newtheorem{defn}[thm]{\protect\definitionname}
  \theoremstyle{remark}
  \newtheorem{rem}[thm]{\protect\remarkname}
  \theoremstyle{plain}
  \newtheorem{lem}[thm]{\protect\lemmaname}
\newcommand{\p}{\mathbb{P}}
\newcommand{\N}{\mathbb{N}}
\newcommand{\E}{\mathbb{E}}
\newcommand{\R}{\mathbb{R}}
\newcommand{\Z}{\mathbb{Z}}
\newcommand{\1}{\mathbbm{1}}
\date{\today}
\newtheorem{theorem}{Theorem}[section]
\newtheorem{lemma}[theorem]{Lemma}
\newtheorem{hypothesis}{Hypothesis}
\newtheorem{example}[theorem]{Example}
\numberwithin{equation}{section}
  \providecommand{\definitionname}{Definition}
  \providecommand{\lemmaname}{Lemma}
  \providecommand{\remarkname}{Remark}
\providecommand{\theoremname}{Theorem}
\begin{document}

\title{Perfect simulation for the Feynman-Kac law on the path space}

\author{Christophe Andrieu%
\thanks{Department of Mathematics University of Bristol University Walk Bristol
BS8 1TW, UK, \texttt{c.andrieu@bris.ac.uk} %
}, Nicolas Chopin %
\thanks{C.R.E.S.T., Timbre J350 3, Avenue Pierre Larousse 92240 MALAKOFF,
FRANCE, \texttt{Nicolas.Chopin@ensae.fr}%
} Arnaud Doucet%
\thanks{Department of Statistics 1 South Parks Road Oxford, OX1 3TG United
Kingdom, \texttt{doucet@stats.ox.ac.uk} %
}, Sylvain Rubenthaler%
\thanks{Laboratoire J. A. Dieudonné Université de Nice - Sophia Antipolis
Parc Valrose 06108 Nice, cedex 2, FRANCE, \texttt{rubentha@unice.fr} %
}}
\maketitle
\selectlanguage{english}%
\begin{abstract}
This paper describes an algorithm of interest. This is a preliminary
version and we intend on writing a better descripition of it and getting
bounds for its complexity.
\end{abstract}
\selectlanguage{british}%

\section{Introduction}

We are given a transition kernel $M$ (on a space $E$), $M_{1}$
a probability measure on $E$ and potentials $(G_{k})_{k\geq1}$ ($G_{k}:E\rightarrow\R_{+}$).
All densities and kernels are supposed to have a density with respect
to some reference measure $\mu$ on $E$. We want to draw samples
according to the law (on paths of length $P$)
\[
\pi(f)=\frac{\mathbb{E}(f(X_{1},\dots,X_{P})\prod_{i=1}^{P-1}G_{i}(X_{i})}{\mathbb{E}(\prod_{i=1}^{P-1}G_{i}(X_{i})}
\]
where $(X_{k})$ is Markov with initial law $M_{1}$ and transition
$M$. For all $n\in\N^{*}$, we note $[n]=\{1,\dots,n\}$.

\section{Densities of branching processes}

\subsection{Description of a branching system}

We start with $n_{1}$ particles (i.i.d. with law $M_{1}$, $n_{1}$
is a fixed number). We then proceed recursively through time. If we
have $N_{i}$ particules at time $k$, the system evolves in the following
manner:
\begin{itemize}
\item The number of childern of $X_{k}^{i}$ (the $i$-th particle at time
$k$) is a random variable $A_{k+1}^{i}$ with law $f_{k+1}$ such
that : $\p(A_{k+1}^{i}=j)=f_{k+1}(G_{k}(X_{k}^{i}),j)$ (here, $f_{k}$
is a law with a parameter $G_{k}(X_{k}^{i})$, we will define this
law later). The variables $A_{k+1}^{i}$ ($1\leq i\leq N_{k}$) are
independent. We then have $N_{k+1}=\sum_{i=1}^{N_{k}}A_{k+1}^{i}$
\item If $N_{k+1}\neq0$, we draw $\sigma_{k+1}$ uniformly in $\mathcal{S}_{N_{k+1}}$
(the $N_{k+1}$-th symmetric group). If $N_{k+1}=0$, we use the convention
$\mathcal{S}_{N_{k+1}}=\emptyset$, we do not draw $\sigma_{k+1}$
and the system stops there. 
\item We set $\forall i\in[N_{k}]$, $C_{k+1}^{i}=\{A_{k+1}^{1}+\dots+A_{k+1}^{i-1},\dots,A_{k+1}^{1}+\dots+A_{k+1}^{i-1}+A_{k+1}^{i}\}$.
If $j\in\sigma_{k+1}(C_{k+1}^{i})$, we draw $X_{k+1}^{j}\sim M(X_{k}^{i},.)$.
We say that $i$ at time $k$ is the father of $j$ at time $k+1$.
We will denote this relation by the symbol $(i,k)\rightsquigarrow(j,k+1)$.\end{itemize}
\begin{defn}
\label{def:ancestor}If we have integers $k<k'$ and a sequence $(i_{k},i_{k+1},\dots,i_{k'})$
such that $(i_{k},k)\rightsquigarrow(i_{k+1},k+1)\rightsquigarrow\dots\rightsquigarrow(i_{k'},k')$,
we will say that the particle $i_{k}$ at time $k$ is ancestor of
$i_{k'}$ at time $k'$ and we will write $(i_{k},k)\rightsquigarrow(i_{k'},k')$.
We define $P'=\sup\{1\leq k\leq P,N_{k}\neq0\}$ (it is a random variable).
\end{defn}
Such a system has a density on the space 
\begin{multline*}
\{(n_{2},\dots,n_{p},x_{k}^{i},a_{k}^{i},s_{k}):n_{2},\dots,n_{P}\in\N,\\
x_{k}^{i}\in E(1\leq k\leq P,1\leq i\leq n_{k}),a_{k}^{i}\in\N(2\leq k\leq P,1\leq i\leq n_{k}),s_{k}\in\mathcal{S}_{N_{k}}(2\leq k\leq P)\}\,.
\end{multline*}
 This density is equal to : 
\begin{multline*}
q_{0}(n_{2},\dots,n_{P},(a_{k}^{i})_{2\leq k\leq p',1\leq i\leq N_{k}},(x_{k}^{i})_{1\leq k\leq p',1\leq i\leq n_{k}},(s_{n})_{2\leq n\leq p'})\\
=\prod_{i=1}^{N_{1}}M_{1}(x_{1}^{i})\prod_{k=2}^{P}\prod_{i=1}^{N_{k-1}}f_{k}(G_{k-1}(x_{k-1}^{i}),a_{k}^{i})\frac{1}{n_{k}!}\prod_{j\in s_{k}(B_{k}^{i})}M(x_{k-1}^{i},x_{k}^{j})\,,
\end{multline*}
with the convention $\prod_{i\in\emptyset}\dots=1$. In all this article,
we use the letters $X$, $A$, $B$, $N$, $P'$, $\sigma$ for random
variables (respectively in $E$, $\N$, $\N$, $\N$, $\N$, some
permutation set) and we will use the letters $x$, $a$, $b$, $n$,
$p'$, $s$ when describing densities for these variables. 
\begin{rem}
The random permutations $\sigma_{k}$ ease the writing of the formulas
but have no deep signification. Their technical purpose is to ensure
the densities $q$, $\widehat{\pi}$ defined below are mutually absolutly
continuous on some set of non-zero measure (for $q$ and $\widehat{\pi}$).
\end{rem}

\subsection{Proposal density}

We take the above branching system and we draw a path by drawing a
number $i$ uniformly in $\{1,\dots,N_{P}\}$. We take a random number
$B_{P'}$ uniformly in $[N_{P'}]$. We then look at the ancestral
path of $X_{N_{P'}}^{B_{P'}}$, meaning we build recursively backwards
$(B_{k})_{1\leq k\leq P'}$ by taking for all $k$ in $\{1,\dots,P'\}$,
$B_{k}$ such that $B_{k+1}\in\sigma_{k+1}(C_{k+1}^{B_{k}})$. We
obtain a branching system containing one special trajectory $(X_{1}^{B_{1}},\dots,X_{P'}^{B_{P'}})$.
$ $This random variable lives in the following space (with $p'=\sup\{1\leq k\leq P:N_{k}\neq0\}$).
\begin{multline}
\{(n_{2},\dots,n_{P},x_{k}^{i},a_{k}^{i},s_{k},b_{i}):n_{2},\dots,n_{P}\in\N,\\
x_{k}^{i}\in E(1\leq k\leq p',1\leq i\leq n_{k}),a_{k}^{i}\in\N(2\leq k\leq p',1\leq i\leq n_{i}),\\
s_{k}\in\mathcal{S}_{n_{k}}(2\leq k\leq p'),b_{i}\in[n_{i}](1\leq i\leq p')\}\,,\label{eq:espace}
\end{multline}
and have a density $q$ satisfying:
\begin{multline*}
q(n_{2},\dots,n_{P},(a_{k}^{i})_{2\leq k\leq P,1\leq i\leq n_{k}},(x_{k}^{i})_{1\leq k\leq P,1\leq i\leq n_{k}},(s_{k})_{2\leq k\leq P},(b_{k})_{1\leq k\leq P})\times\1_{p'=P}\\
=\frac{1}{n_{P}}q_{0}(n_{2},\dots,n_{P},(a_{k}^{i})_{2\leq k\leq P,1\leq i\leq n_{k}},(x_{k}^{i})_{1\leq k\leq P,1\leq i\leq n_{k}},(s_{k})_{2\leq k\leq P})\times\1_{p'=P}\,.
\end{multline*}
What happens precisely outside the set $\{p'=P\}$ is not useful to
us. We define ancestry relationships in this system as in Definition
\ref{def:ancestor}.

\subsection{Target law}

We draw a trajectory $(Y_{1},\dots,Y_{P})$ with the law $\pi$ then
a branching system conditioned on containing the trajectory $(Y_{1},\dots,Y_{P})$.
The order of operations is as followed
\begin{itemize}
\item Draw $(Y_{1},\dots,Y_{P})$ with law $\pi(.)$.
\item We draw $B_{1}$ uniformly in $[N_{1}]$, we set $X_{1}^{B_{1}}=Y_{1}$.
We draw $(X_{1}^{i})_{1\leq i\leq n_{1},i\neq b_{1}}$ i.i.d. variables
of law $M_{1}$.
\item If we have the $(k-1)$-th generation, we draw $A_{k}^{B_{k-1}}$
with law $f_{k}(G_{k-1}(X_{k-1}^{B_{k-1}}),.)$ conditioned to be
in $\N^{*}$ (we call this law $\widehat{f}(G_{k-1}(X_{k-1}^{B_{k-1}}),.)$).
For $i\in N_{k-1}$, $i\neq B_{k-1}$, we draw $A_{k}^{i}\sim f_{k}(G_{n-1}(x_{k-1}^{B_{k-1}}),.)$.
We set $N_{k}=\sum_{i=1}^{N_{k-1}}A_{k}^{i}$. We draw $\sigma_{k}$
uniformly in $\mathcal{S}_{N_{k}}$. We set $B_{k}=\sigma_{k}(A_{k}^{1}+\dots+A_{k}^{b_{k-1}-1}+1)$,
$X_{k}^{B_{k}}=Y_{k}$. For $j\in[N_{k}]$, if $j\neq B_{k}$ and
$j\in\sigma_{k}(C_{k}^{i})$ ($C_{k}^{i}=\{A_{k}^{1}+\dots+A_{k}^{i-1}+1,\dots,A_{k}^{1}+\dots+A_{k}^{i}\}$),
we draw $X_{k}^{j}\sim M(X_{k-1}^{i},.)$.
\end{itemize}
We get a variable in the following space 
\begin{multline*}
\{(n_{2},\dots,n_{P},x_{k}^{i},a_{k}^{i},\sigma_{k},b_{i}):n_{2},\dots,n_{P}\in\N^{*},x_{k}^{i}\in E(1\leq k\leq P,1\leq i\leq n_{k}),\\
A_{k}^{i}\in\N(1\leq k\leq P,1\leq i\leq n_{k}),s_{k}\in\mathcal{S}_{N_{n}}(2\leq n\leq P),b_{i}\in[N_{i}](1\leq i\leq n)\}\,,
\end{multline*}
with the following density:
\begin{multline}
\widehat{\pi}(n_{2},\dots,n_{P},(a_{k}^{i})_{2\leq k\leq P,1\leq i\leq N_{k}},(x_{k}^{i})_{1\leq k\leq P,1\leq i\leq N_{k}},(s_{k})_{2\leq k\leq P},(b_{k})_{1\leq k\leq P}))\\
=\pi(x_{1}^{b_{1}},\dots,x_{P}^{b_{P}})\frac{1}{N_{1}}\prod_{1\leq i\leq N_{1},i\neq b_{1}}M_{1}(x_{1}^{i})\\
\prod_{k=2}^{P}\left(\widehat{f}_{k}(G_{k-1}(x_{k-1}^{b_{k-1}}),a_{k}^{b_{k-1}})\prod_{1\leq i\leq N_{k-1},i\neq b_{k-1}}f_{k}(G_{k-1}(x_{k-1}^{i}),a_{k}^{i})\right.\\
\left.\times\frac{1}{n_{k}!}\prod_{1\leq i\leq N_{k-1}}\prod_{j\in s_{k}(B_{k}^{i}),,j\neq b_{k}}M(x_{k-1}^{i},x_{k}^{j})\right)\,.\label{eq:densite-cible}
\end{multline}
Notice that: ($\forall z,j$) $\widehat{f}_{k}(g,j)=\frac{f_{k}(g,j)}{1-f_{k}(g,0)}$
( $x_{k-1}^{b_{k-1}}$ is conditioned on having at least one children).
We define ancestry relationships in this system as in Definition \ref{def:ancestor}.

\subsection{Ratio of the densities}

We write the ratio $\widehat{\pi}/q$ and we get:

\begin{multline*}
\frac{\widehat{\pi}(N_{2},\dots,N_{P},(A_{k}^{i})_{1\leq k\leq P-1,1\leq i\leq N_{k}},(x_{k}^{i})_{1\leq k\leq P,1\leq i\leq N_{k}},(s_{k})_{2\leq k\leq P},(b_{k})_{1\leq k\leq P}))}{q(N_{2},\dots,N_{P},(A_{k}^{i})_{1\leq k\leq P-1,1\leq i\leq N_{k}},(x_{k}^{i})_{1\leq k\leq P,1\leq i\leq N_{k}},(s_{k})_{2\leq k\leq P},(b_{k})_{1\leq k\leq P}))}\\
=\pi(x_{1}^{b_{1}},\dots,x_{P}^{b_{P}})\times\frac{N_{P}}{N_{1}}\times\frac{1}{M_{1}(x_{1}^{b_{1}})\prod_{k=2}^{P}M(x_{k-1}^{b_{k-1}},x_{k}^{b_{k}})}\times\prod_{k=2}^{P}\frac{\widehat{f}_{k}(G_{k-1}(x_{k-1}^{b_{k-1}}),A_{k}^{b_{k-1}})}{f_{k}(G_{k-1}(x_{k-1}^{b_{k-1}}),A_{k}^{b_{k-1}})}\,.
\end{multline*}
Let us take $f_{k}$ such that for all $g,i$ ($i\neq0$), $\frac{\widehat{f}_{k}(g,i)}{f_{k}(g,i)}=\frac{\Vert G_{k-1}\Vert_{\infty}}{g}$.
This means that $1-f_{k}(g,0)=\frac{g}{\Vert G_{k-1}\Vert_{\infty}}$.
We then get:

\begin{equation}
\frac{\widehat{\pi}(\dots)}{q(\dots)}=\frac{N_{P}\prod_{i=2}^{P}\Vert G_{i-1}\Vert_{\infty}}{N_{1}Z}\,,\label{eq:ratio-1}
\end{equation}
with $Z=\E(\prod_{k=1}^{P-1}G_{k}(X_{k}))$ ($(X_{k})_{k\geq1}$ is
a Markov chain with initial law $M_{1}$ and kernel transition $M$).

\section{Perfect simulation algorithm}

\subsection{Stability of the branching process}

We want the branching process to be stable. So we need that 
\begin{equation}
\frac{1}{N_{k-1}}\sum_{i=1}^{N_{k-1}}\sum_{j=1}^{+\infty}jf_{k}(G_{k-1}(X_{k-1}^{i}),j)\mbox{ be of order 1 (\ensuremath{\forall}k).}\label{eq:cond-01}
\end{equation}
 Let us take (for some $q_{k}$), 
\[
f_{k}(g,0)=1-\frac{\Vert G_{k}\Vert_{\infty}}{\beta_{k}}\,,\, f_{k}(g,i)=\frac{\Vert G_{k}\Vert_{\infty}}{q_{k}\beta_{k}}\mbox{ for }1\leq i\leq q_{k}\,.
\]
 We then get $\sum_{i=1}^{q_{k}}i\times f_{k}(g,i)=\frac{(q_{k}+1)g}{2\beta_{k}}$.
So it is sensible to fix $q_{k}$ such that 
\begin{equation}
\Vert G_{k}\Vert_{\infty}=\frac{q_{k}+1}{2}\times\frac{1}{N}\sum_{i=1}^{N}G_{k-1}(\overline{X}_{k-1}^{i})\label{eq:stabilite}
\end{equation}
where $(\overline{X}_{k-1}^{i})$ is a sequential Monte-Carlo system
with $N$ particles, this has to be computed beforehand. Simulations
show that this procedure indeed gives you stable branching processes.

\subsection{\label{sub:Markovian-transition}Markovian transition}

Let us now describe a Markovian transition in $E^{P}$. The integer
$N_{1}$ is fixed. We start in $(z_{1},\dots,z_{P})\in E^{P}$. 
\begin{enumerate}
\item We draw $N_{2},\dots,N_{P}$, $(X_{k}^{i})_{1\leq k\leq P,1\leq i\leq N_{k},i\neq B_{k}}$,
$(A_{k}^{i})_{1\leq k\leq P,1\leq i\leq n_{k}}$, $(S_{k}\in\mathcal{S}_{N_{k}})_{2\leq k\leq P}$,
$(B_{k})_{1\leq k\leq P}$ with the density 
\begin{equation}
\frac{\widehat{\pi}(\dots,z_{1},\dots,z_{P},\dots)}{\pi(z_{1},\dots,z_{P})}\label{eq:cond-SMC}
\end{equation}
 ($z_{1},\dots z_{P}$ in place of $x_{1}^{b_{1}},\dots,x_{P}^{b_{P}}$
in equation (\ref{eq:densite-cible})). This amounts to drawing a
genealogy conditionned to contain $(z_{1},\dots,z_{P})$. Let us set
$\forall k\in\{1,\dots,P\}$, $X_{k}^{B_{k}}=z_{k}$. Let $\mathcal{X}$
be the variable containing all the $N_{k},\, X_{k}^{i},\, A_{k}^{i},\, S_{k},\, B_{k}$.
We say that $\chi$ is a conditionnal forest. 
\item We draw $\overline{N}{}_{2},\dots,\overline{N}{}_{P}$ $(\overline{X}{}_{k}^{i})_{1\leq k\leq P,1\leq i\leq\overline{N}_{k}}$,
$(\overline{A}{}_{k}^{i})_{1\leq k\leq P,1\leq i\leq\overline{N}_{k}}$,
$(\overline{S}{}_{k}\in\mathcal{S}_{N_{k}})_{2\leq k\leq P}$, $(\overline{B}{}_{k})_{1\leq k\leq P}$
with density $q(.)$. We denote by $\overline{\mathcal{X}}$ the corresponding
variable. We say that $\overline{\chi}$ is a proposal. 
\item We draw $V$ uniform on $[0,1]$. If $ $ $V\leq\alpha(\chi,\overline{\chi}):=\inf\left(1,\frac{\widehat{\pi}(\overline{\mathcal{X}})q(\mathcal{X})}{\widehat{\pi}(\mathcal{X})q(\overline{\mathcal{X}})}\right)$,
we set $(\overline{Z}{}_{1},\dots,\overline{Z}{}_{P})=(\overline{X}{}_{1}^{B_{1}},\dots,\overline{X}_{P}{}^{B_{P}})$,
and if not, we set $(\overline{Z}{}_{1},\dots,\overline{Z}{}_{P})=(z_{1},\dots,z_{P})$. \end{enumerate}
\begin{rem}
In the case $\overline{N}_{P}=0$, we then have $q(\overline{\chi})=0$,
$\widehat{\pi}(\chi)\neq0$ and so $\alpha(\chi,\overline{\chi})=0$. \end{rem}
\begin{lem}
The transformation of $(z_{1},\dots,z_{P})$ into $(\overline{Z}_{1},\dots,\overline{Z}_{P})$
is a Metropolis Markov kernel (on $E^{P}$) for which $\pi$ is invariant
(much in the spirit of \cite{andrieu-doucet-holenstein-2010}). \end{lem}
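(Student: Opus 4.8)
The plan is to recognise the described move as an instance of the auxiliary-variable (pseudo-marginal) construction of \cite{andrieu-doucet-holenstein-2010}: I would lift the chain from $E^{P}$ to the space of forests, show that on that augmented space the move is an ordinary independent Metropolis--Hastings step leaving $\widehat{\pi}$ invariant, and then marginalise back down to the selected trajectory.

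First I would isolate the structural fact read off from \eqref{eq:densite-cible}, namely that the target density factorises as
\[
\widehat{\pi}(\mathcal{X})=\pi(x_{1}^{b_{1}},\dots,x_{P}^{b_{P}})\,r(\mathcal{X}),
\]
where $r$ gathers all the remaining factors (the initial draws, the hatted and ordinary offspring laws, the permutation weights and the non-selected transitions). For each fixed value of the selected path, $r$ is by construction the law of a branching system conditioned to contain that path, hence it integrates to one over the rest of the forest. Two consequences follow: the selected-path marginal of $\widehat{\pi}$ is exactly $\pi$, and the conditional law of the whole forest given that its selected path equals $z=(z_{1},\dots,z_{P})$ has density $\widehat{\pi}(\dots,z,\dots)/\pi(z)$. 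This is precisely the density \eqref{eq:cond-SMC} drawn in Step~1, so that step realises $\mathcal{X}\sim\widehat{\pi}(\cdot\mid z)$.

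Next I would treat Steps~2--3 as a single kernel $K$ on the forest space. Since $\overline{\mathcal{X}}$ is proposed from $q(\cdot)$ independently of the current $\mathcal{X}$ and accepted with probability $\alpha(\chi,\overline{\chi})=\inf\bigl(1,\tfrac{\widehat{\pi}(\overline{\mathcal{X}})q(\mathcal{X})}{\widehat{\pi}(\mathcal{X})q(\overline{\mathcal{X}})}\bigr)$, the kernel $K$ is exactly the independent Metropolis--Hastings kernel with target $\widehat{\pi}$ and proposal $q$; detailed balance $\widehat{\pi}(\mathcal{X})q(\overline{\mathcal{X}})\alpha(\chi,\overline{\chi})=\widehat{\pi}(\overline{\mathcal{X}})q(\mathcal{X})\alpha(\overline{\chi},\chi)$ then holds by the usual identity for the $\inf$ acceptance rule, giving $\widehat{\pi}K=\widehat{\pi}$. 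The absolute-continuity property flagged in the Remark (secured by the random permutations $\sigma_{k}$) ensures the ratio is well defined $\widehat{\pi}$-almost surely, while the boundary case $\overline{N}_{P}=0$ yields $q(\overline{\chi})=0$, hence $\alpha=0$ and an automatic rejection, which is consistent. I would also note that \eqref{eq:ratio-1} makes the ratio $\widehat{\pi}(\overline{\mathcal{X}})q(\mathcal{X})/\bigl(\widehat{\pi}(\mathcal{X})q(\overline{\mathcal{X}})\bigr)$ collapse to $(\overline{N}_{P}/\overline{N}_{1})/(N_{P}/N_{1})$, so the intractable $Z$ and the norms cancel and $\alpha$ is computable.

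Finally I would assemble the marginal argument. The induced kernel on $E^{P}$ is
\[
\bar{K}(z,dz')=\int\widehat{\pi}(d\mathcal{X}\mid z)\,\int K(\mathcal{X},d\overline{\mathcal{X}})\,\delta_{(\overline{x}_{1}^{b_{1}},\dots,\overline{x}_{P}^{b_{P}})}(dz'),
\]
the last factor extracting the selected path (and returning $z$ on rejection, since then $\mathcal{X}$ is retained and its path is $z$). Starting from $z\sim\pi$, Step~1 produces $(z,\mathcal{X})\sim\widehat{\pi}$; applying $K$ preserves $\widehat{\pi}$, so the output forest is again $\widehat{\pi}$-distributed; projecting onto its selected path returns a $\pi$-distributed sample. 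Hence $\pi\bar{K}=\pi$. I expect the only genuine obstacle to be the bookkeeping underlying the two structural claims above: verifying that $r(\mathcal{X})$ is a bona fide conditional density (so Step~1 truly samples $\widehat{\pi}(\cdot\mid z)$) and that the supports of $\widehat{\pi}$ and $q$ coincide on $\{p'=P\}$ (so that $\bar{K}$ is a legitimate Metropolis kernel); both reduce to the normalisation and mutual absolute-continuity facts already built into the constructions of $\widehat{\pi}$ and $q$.
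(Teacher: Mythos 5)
Your proposal is correct and follows the same route as the paper: the paper's own (much terser) proof likewise lifts to the forest space, observes that starting from $z\sim\pi$ Step~1 yields $\chi\sim\widehat{\pi}$, invokes the Metropolis property of the independent proposal to conclude $\overline{\overline{\chi}}\sim\widehat{\pi}$, and reads off the selected path as the $\pi$-distributed marginal. Your additional bookkeeping (the factorisation $\widehat{\pi}=\pi\cdot r$ with $r$ a normalised conditional, detailed balance, and the cancellation to $\overline{N}_{P}/N_{P}$ since $N_{1}=\overline{N}_{1}$ is fixed) simply makes explicit what the paper leaves implicit.
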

\begin{proof}
Suppose that we draw $(Z_{1},\dots,Z_{P})\sim\pi$ and we use it as
a starting point in the transition described above. Then $\chi\sim\widehat{\pi}$.
Wet set $\overline{\overline{\chi}}=\overline{\chi}\1_{V\leq\alpha(\chi,\overline{\chi})}+\chi\1_{V>\alpha(\chi,\overline{\chi})}$.
By Metropolis, we have that $\overline{\overline{\chi}}\sim\widehat{\pi}$.
And so $ $$(\overline{Z}{}_{1},\dots,\overline{Z}{}_{P})\sim\widehat{\pi}$.
\end{proof}
Due to (\ref{eq:ratio-1}), we have 
\begin{equation}
\frac{\widehat{\pi}(\overline{\mathcal{X}})q(\mathcal{X})}{\widehat{\pi}(\mathcal{X})q(\overline{\mathcal{X}})}=\frac{\overline{N}_{P}}{N_{P}}\,.\label{eq:rapport-acceptation}
\end{equation}

\subsection{Backward coupling}

We know want to use a backward coupling algorithm (as in \cite{foss-tweedie-1998,propp-wilson-1996}).
Any $U_{-i}$ is sufficient to make a simulation of the Markovian
transition above. We can always parametrize the Markov transition
above on the following way 
\begin{equation}
(\overline{Z}{}_{1},\dots,\overline{Z}{}_{P})=F_{U}(z_{1},\dots,z_{P})\,,\, U\sim\mu\,,\label{eq:parametrization}
\end{equation}
with some law $\mu$ on a big space $E'$ and $F$ above is a function
of $U$ and $(z_{1},\dots,z_{P})$ ($U$ is written as an index because
it will be more convenient later). We will write $\chi(z_{1},\dots,z_{P},U)$,
$\overline{\chi}(U)$, $V(U)$ for, respectively, the conditionnal
forest, the proposal and the uniform variable appearing in the building
of $ $$(\overline{Z}{}_{1},\dots,\overline{Z}{}_{P})$. We remark
here that $\overline{\chi}(U)$ and $V(U)$ do not depend on $(z_{1},\dots,z_{P})$.
We will write $N_{P}(z_{1},\dots,z_{P},U)$ for the size of the population
in $\chi(z_{1},\dots,z_{P},U)$ at time $P$ and $\overline{N}_{P}$
for the size of the population in $\overline{\chi}(U)$ at time $P$.

By Theorem 3.1 of \cite{foss-tweedie-1998}, if $T$ is a stopping
time, relatively to the filtration $(\sigma(U_{0},\dots,U_{-i}))_{i\geq0}$,
such that $\forall(z_{1}^{(1)},\dots,z_{P}^{(1)}),(z_{1}^{(2)},\dots,z_{P}^{(2)})\in E^{P},$
$F_{U_{0}}\circ\dots\circ F_{U_{-T}}(z_{1}^{(1)},\dots,z_{P}^{(1)})=F_{U_{0}}\circ\dots\circ F_{U_{-T}}(z_{1}^{(2)},\dots,z_{P}^{(2)})$,
then $F_{U_{0}}\circ\dots\circ F_{-T}(z_{1}^{(1)},\dots,z_{P}^{(1)})$
is exactly of law $\pi$. So we have the following lemma. 
\begin{lem}
Suppose we have a bound of the form $ $$ $$ $$N_{P}(z_{1},\dots,z_{P},u)\leq M(u)$,
for all $(z_{1},\dots,z_{P})$ in $\mathcal{E}_{P}:=\{(X_{1}(\omega),\dots,X_{P}(\omega)),\,\omega\in\Omega\}$,
$\forall u\in E'$. Let 
\[
T=\inf\left\{ i:V(U_{-i})\leq\frac{\overline{N}_{P}(U_{-i})}{M(U_{-i})}\right\} \text{.}
\]
Then, for all $(z_{1},\dots,z_{P})\in\mathcal{E}_{P}$, $F_{U_{0}}\circ\dots\circ F_{U_{-T}}(z_{1},\dots,z_{P})\sim\pi$.
\end{lem}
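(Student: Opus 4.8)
The plan is to invoke Theorem~3.1 of \cite{foss-tweedie-1998} exactly as quoted above, so that the whole argument reduces to checking its two hypotheses for the stopping time $T$: first, that $T$ is a stopping time relative to $(\sigma(U_0,\dots,U_{-i}))_{i\geq0}$, and second, that at the index $-T$ the backward composition $F_{U_0}\circ\dots\circ F_{U_{-T}}$ has coalesced, i.e.\ it takes the same value on every $(z_1,\dots,z_P)\in\mathcal{E}_P$. The measurability is immediate: since $\overline{\chi}(U)$ and $V(U)$ depend only on $U$, both $\overline{N}_P(U_{-i})$ and $V(U_{-i})$ are $\sigma(U_{-i})$-measurable, whence $\{T\leq i\}=\bigcup_{j=0}^{i}\{V(U_{-j})\leq\overline{N}_P(U_{-j})/M(U_{-j})\}\in\sigma(U_0,\dots,U_{-i})$.

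The heart of the proof is the coalescence step, and I would first record the mechanism that makes one-step coalescence possible: whenever a move is accepted, the output equals the special trajectory $(\overline{X}_1^{B_1},\dots,\overline{X}_P^{B_P})$ extracted from the proposal $\overline{\chi}(U)$, which by construction does \emph{not} depend on the starting point $(z_1,\dots,z_P)$. Hence it suffices to show that the index $-T$ realises an acceptance simultaneously for every starting point in $\mathcal{E}_P$. Fixing $u$ with $V(u)\leq\overline{N}_P(u)/M(u)$ and any $z=(z_1,\dots,z_P)\in\mathcal{E}_P$, I would use (\ref{eq:rapport-acceptation}) to write the acceptance probability as $\alpha(\chi(z,u),\overline{\chi}(u))=\min(1,\overline{N}_P(u)/N_P(z,u))$, and then split on the sign of $\overline{N}_P(u)-N_P(z,u)$: if $\overline{N}_P(u)\geq N_P(z,u)$ the probability is $1\geq V(u)$, while if $\overline{N}_P(u)<N_P(z,u)$ it equals $\overline{N}_P(u)/N_P(z,u)\geq\overline{N}_P(u)/M(u)\geq V(u)$, where the uniform bound $N_P(z,u)\leq M(u)$ is used. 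In either case $V(u)\leq\alpha(\chi(z,u),\overline{\chi}(u))$, so the move is accepted; as $z\in\mathcal{E}_P$ was arbitrary, $F_u$ is constant on $\mathcal{E}_P$, equal to the proposal trajectory $w(u):=(\overline{X}_1^{B_1},\dots,\overline{X}_P^{B_P})$ of $\overline{\chi}(u)$, which is itself a path started from $M_1$ with transition $M$ and so lies again in $\mathcal{E}_P$.

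Applying this with $u=U_{-T}$ gives $F_{U_{-T}}(z)=w(U_{-T})$ for all $z\in\mathcal{E}_P$; composing on the left with the maps $F_{U_{-T+1}},\dots,F_{U_0}$, which for fixed $U_{-T+1},\dots,U_0$ are deterministic, then produces the same value of $F_{U_0}\circ\dots\circ F_{U_{-T}}(z)$ for every $z\in\mathcal{E}_P$. This is exactly the coalescence hypothesis, and Theorem~3.1 of \cite{foss-tweedie-1998} then yields $F_{U_0}\circ\dots\circ F_{U_{-T}}(z_1,\dots,z_P)\sim\pi$.

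The point I expect to be the main obstacle is the uniform-in-$z$ acceptance argument above: it is essential that $\mathcal{E}_P$ is simultaneously the set on which the bound $N_P(z,u)\leq M(u)$ holds and the set into which the coalesced value $w(u)$ falls, so that the common state can be fed consistently into the later maps and the bound remains available along the whole backward composition. A secondary point to verify, needed so that the conclusion is not vacuous, is that $T<\infty$ almost surely; since the $U_{-i}$ are i.i.d.\ and $\p(V(U)\leq\overline{N}_P(U)/M(U))>0$, the time $T$ is dominated by a geometric variable and is therefore finite.
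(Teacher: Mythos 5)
Your proof is correct and follows the same route as the paper, which simply invokes Theorem~3.1 of the Foss--Tweedie reference and leaves the verification of its hypotheses implicit; you supply exactly the intended details, namely that $T$ is a stopping time because $\overline{N}_P(U_{-i})$ and $V(U_{-i})$ depend only on $U_{-i}$, and that coalescence holds at $-T$ because the acceptance ratio $\overline{N}_P(u)/N_P(z,u)\geq\overline{N}_P(u)/M(u)\geq V(u)$ forces acceptance uniformly over $z\in\mathcal{E}_P$, making $F_u$ constant there. Your additional remarks (that the coalesced value lies back in $\mathcal{E}_P$, and that $T<\infty$ a.s.\ needs $\p(V(U)\leq\overline{N}_P(U)/M(U))>0$) are points the paper glosses over, but they do not change the approach.
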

Under the assumption of the above lemma, we can implement the following
algorithm.

\begin{algorithm}[H]
i=-1

Repeat

~~~~i=i+1

~~~~draw $U_{-i}$

~~~~compute $R_{-i}=\overline{N}_{P}(U_{-i})/M(U_{-i})$

Until ($V(U_{-i})\leq R_{-i}$)

Take any $(z_{1},\dots,z_{P})\in\mathcal{E}_{P}$

Display $F_{U_{0}}\circ\dots\circ F_{U_{-i}}(z_{1},\dots,z_{P})$

\caption{Perfect simulation algorithm}
\end{algorithm}

\section{Applications}

\subsection{Random directed polymers}

This kind of model is described in \cite{den-hollander-2009} (chapter
12), \cite{bezerra-tindel-viens-2008}. Let $(X_{k})_{k\geq1}$ be
a symmetric simple random walk in $\Z$ with $X_{1}=0$. We are given
i.i.d. variables $(\xi_{k,i})_{k\geq1,i\in\Z}$ with Bernoulli law
of parameter $p>0$. These variables are called ''random environment''.
We set (random) potentials : $V_{k}(i)=\exp(-\beta\xi_{k,i})$ ($\beta>0$)
and we are interested in the following quenched law (quenched means
the $\xi_{k,i}$ are fixed, the $\E_{X}$ below means we are taking
expectation on the $X_{i}$'s only and not on the $\xi_{n,i}$'s)
: 
\[
\pi_{1:P}(f)=\frac{\E_{X}(f(X_{1:P})\prod_{i=1}^{P-1}V_{i}(X_{i}))}{\E_{X}(\prod_{i=1}^{P-1}V_{i}(X_{i}))}\,.
\]

For a trajectory $(z_{1},\dots,z_{P})\in\mathcal{E}_{P}$ and a random
variable $U$ we draw a conditionnal forest $\chi(z_{1},\dots,z_{P},U)$
as in \ref{sub:Markovian-transition}. We denote ($\forall i\in[P-1]$)
by $N_{P}^{(i)}(z_{i},U)$ the number of particles at time $P$ different
from the particle $B_{P}$ and having $ $$(B_{i},i)$ as an ancestor.
This is indeed a function independant of $z_{1},\dots,z_{i-1},z_{i+1},\dots,z_{n}$.
The trajectory of a simple random walk in $\Z$ is such that 
\[
M_{P}^{(i)}(U):=\sup\{N_{P}^{(i)}(z_{i},U),(z_{1},\dots,z_{P})\in\mathcal{E}_{P}\}=\sup\{N_{P}^{(i)}(z_{i},U),z_{i}\in\{-(i-1),-(i-1)+2,\dots,i-1\}<\infty\,.
\]
We denote by $N_{P}^{(c)}(U)$ the number of particles at time $P$
different from the particle $B_{P}$ and not having any $(B_{i},i)$
as an ancestor. This is indeed a function independant of $(z_{1},\dots,z_{P})$.
We have $N_{P}=1+\sum_{i=1}^{P-1}N_{P}^{(i)}(z_{i},U)+N_{P}^{(c)}(U)$.
And so we can bound
\[
N_{P}\leq1+\sum_{i=1}^{P-1}M_{P}^{(i)}(U)+N_{P}^{(c)}(U)\,.
\]
Figures \ref{fig:surdiffusion-01} and \ref{fig:surdiffusion-02}
show two numerical experiments, in two different random environments.
They illustrate in particular the weak disorder behavior explained
in \cite{den-hollander-2009}, chapter 12. 
\begin{figure}[h]
\centering{}\label{fig:surdiffusion-01}\includegraphics[scale=1]{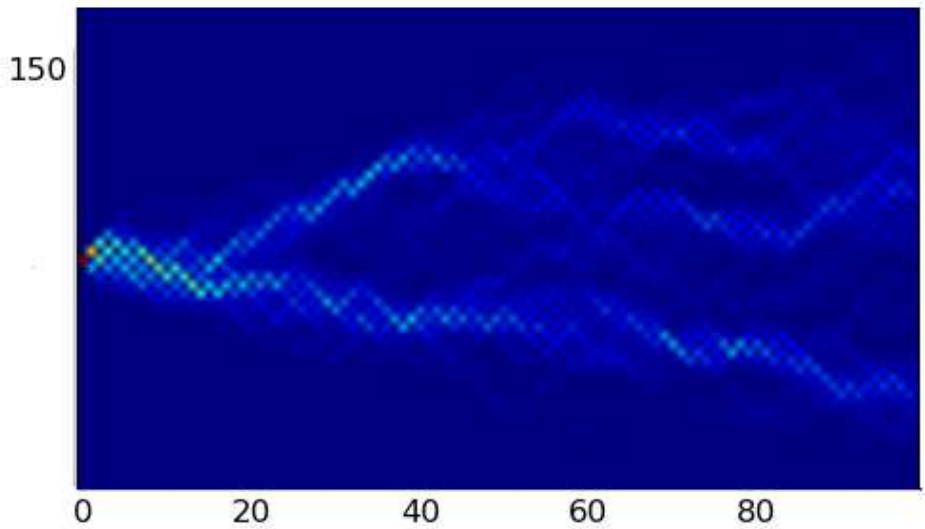}\caption{Random directed polymer, 100 trajectories. }
\end{figure}

\begin{figure}[h]

\centering{}\label{fig:surdiffusion-02}\includegraphics[scale=1]{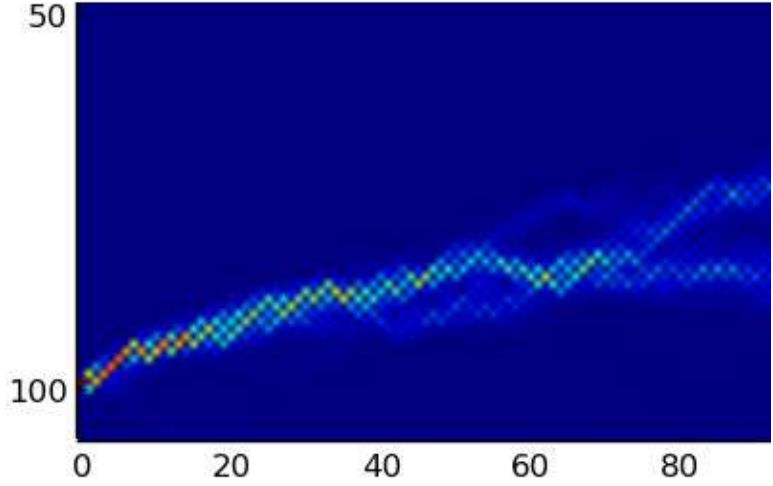}\caption{Random directed polymer, 500 trajectories}
\end{figure}

If we take the expectation over all the variable: $\E(\mbox{max de la traj. sous }\pi_{1:n})$
behaves as $n^{\zeta}$ with $\zeta\neq1/2$.Using our algorithm,
we can simulate trajectories under the law $\pi_{1:P}$ (for fixed
$\xi$, $P\in\N^{*}$). The research of the ancestors having the biggest
number of descendants at time $P$ makes that the computational cost
is $P^{2}.$ Here is the drawing of $\E(\max\dots)$ as a function
of $n$ in a $\log$-$\log$ scale (the blue line has gradient $2/3$,
the green line has gradient

\subsection{Continuous state space}

\begin{hypothesis}\label{hyp:1}

There exists a function $f:\R\rightarrow\R$ such that: for all $x_{1}\in E$,
$i\in\N$, $U_{-i}$ fixed, $(x_{1},\dots,x_{P})$ trajectory drawn
with transitions $M$ using the variables $U_{-i}$ (which we will
denote by $x_{j+1}=M_{U_{-i}}(x_{j})$, $\forall j\in\{2,\dots,P\}$),
for all $S\epsilon>0$, $\forall j\in\{2,\dots,P\}$, $\mbox{diam}(M_{U_{-i}}^{\circ(j-1)}(B_{\epsilon}(x_{1})))\leq f^{\circ(j-1)}(\epsilon)$.

\end{hypothesis}

\begin{example}If the transition $M$ is (for some constants $a,b$)
:
\[
M(x,dy)=\frac{1}{\sqrt{2\pi b^{2}}}\exp\left(-\frac{(y-ax)^{2}}{2b^{2}}\right)\,,
\]
then we can take $f(x)=ax$.\end{example}

We now want to bound the number of descendants generated by the trajectory
$(z_{1},\dots,z_{P})$ during the conditional drawing using the variables
$U_{-i}$. %
{} Let us precise how we do this conditional drawing $(z_{1},\dots,z_{P})$.
We fix $\forall n$, $\beta_{k}=\Vert G_{k}\Vert_{\infty}$ and $q_{k}$
satisfying (\ref{eq:stabilite}). For $g\in[0;\Vert G\Vert_{\infty}]$,
we set $u\mapsto F_{k,g}^{-1}(u)$ to be pseudo-inverse of the cumulative
distribution function of the law $f_{k}(g,.)$ and we set $u\mapsto\widehat{F}_{k,g}^{-1}(u)$
to be the pseudo-inverse of the cumulative distribution function of
the law $\widehat{f}_{k}(g,.)$. We are given a family $(V_{\mathbf{u}},W_{\mathbf{u}})_{\mathbf{u}\in(\N^{*})^{[k]},k\geq1}$
(random variables indexed by infinite sequences of $\N^{*}$) of independent
variables of law $\mathcal{U}([0;1])$. We are given %
{} $(\sigma_{k,N})_{k,N\geq1}$ independent variables such that $\forall k,N$,
$\sigma_{k,N}$ is uniform in $\mathcal{S}_{N}$. Suppose there exists
$M':E\times[0;1]\rightarrow E$ such that if $U\sim\mathcal{U}([0;1])$,
$x\in E$ then $M'(x,U)\sim M(x,dy)$. Suppose there exists $M_{1}':[0;1]\rightarrow\R$
tsuch that if $U\sim\mathcal{U}([0;1])$, then $M_{1}'(U)\sim M_{1}(dx)$.
The simulation goes as follows.
\begin{itemize}
\item We set $X_{1}^{i}=M_{1}'(V_{(i)})$ ($(i)$ is a sequence of length
$1$ taking value $i$) for all $i\in[N_{1}]\backslash\{1\}$, and
$X_{1}^{1}=z_{1}$. We define $\Psi_{1}:[N_{1}]\rightarrow\left(\N^{*}\right)^{[1]}$
by $\Psi_{1}(i)=(i)$. 
\item Suppose we have made the simulation up to time $k<P$ and we have
a function $\Psi_{k}:[N_{k}]\rightarrow(\N^{*})^{[N_{k}]}$ (describing
the genealogy of the particles, $\Psi_{k}(i)$ is the complete ancestral
line of particle $i$). \begin{itemize} \item For $i\in[N_{k}]\backslash\{1\}$,
we set $A_{k+1}^{i}=F_{k,G_{k}(X_{i}^{k})}^{-1}(W_{\Psi_{k}(i)})$;
\item and if $i=1$, then $X_{k}^{1}=z_{k}$, and we set $A_{k+1}^{i}=\widehat{F}_{k+1,G_{k}(z_{k})}^{-1}(W_{\Psi_{k}(i)})$.
\end{itemize} We set $N_{k+1}=\sum_{i=1}^{N_{k}}A_{k+1}^{i}$.\begin{itemize} \item 
For $j\in[N_{k+1}]\backslash\{1\},$ if $A_{k+1}^{1}+\dots+A_{k+1}^{i-1}<j\leq A_{k+1}^{1}+\dots+A_{k+1}^{i}$,
we set $\Psi_{k+1}(j)=(\Psi_{k}(i),j-(A_{k+1}^{1}+\dots+A_{k+1}^{i-1}))$,
$X_{k+1}^{j}=M'(X_{k}^{i},V_{\Psi_{k}(j)})$, \item and if $j=1$,
we set $X_{k+1}^{j}=z_{k+1}$, $\Psi_{k+1}(j)=(1,1,\dots,1)$.\end{itemize}
\item We then set $\overline{X}_{1}^{i}=X_{1}^{\sigma_{1,N_{1}}(i)}$ ($1\leq i\leq N_{1})$,
$B_{1}=\sigma_{1,N_{1}}^{-1}(1)$. We then proceed by recurrence.
If we have $(\overline{X}_{j}^{i})_{1\leq j\leq k,1\leq i\leq N_{k}}$,
$(\overline{A}_{j}^{i})_{2\leq j\leq k,1\leq i\leq N_{j-1}}$, $(\sigma_{j})_{2\leq j\leq k}$,
$B_{1},\dots,B_{n}$ with $\overline{X}_{j}^{i}=X_{j}^{\sigma_{j,N_{j}}(i)}$
($\forall j\in[k]$, $i\in[N_{j}]$) then:\\
We set $\overline{A}_{k+1}^{i}=A_{k+1}^{\sigma_{k,N_{k}}(i)}$, $B_{k+1}^{i}=\{A_{k+1}^{1}+\dots+A_{k+1}^{i-1}+1,\dots,A_{k+1}^{1}+\dots+A_{k+1}^{i}\}$,
$\sigma_{k+1}=\sigma_{k+1,N_{k+1}}^{-1}$, $\overline{B}_{k+1}^{i}=\sigma_{k+1}(B_{k+1}^{\sigma_{k,N_{k}}(i)})$,
$\overline{X}_{k+1}^{i}=X_{k+1}^{\sigma_{k+1,N_{k+1}}(i)}$, ($\forall i\dots$).
We have 

\begin{itemize}
\item if $i\in\overline{B}_{k}^{q}=\sigma_{k+1,N_{k+1}}^{-1}(B_{k+1}^{\sigma_{k,N_{k}}(q)})$
and $i\neq B_{k+1}:=\sigma_{k+1,N_{k+1}}^{-1}(1)$, $\sigma_{k+1,N_{k+1}}(i)\in B_{k+1}^{\sigma_{k,N_{k}}(q)}$,
$X_{k+1}^{\sigma_{k+1,N_{k+1}}(i)}=M'(X_{k}^{\sigma_{k,N_{k}}(q)},V_{\Psi_{k}(\sigma_{k,N_{k}}(q))})$,
then $\overline{X}_{k+1}^{i}=M'(\overline{X}_{k}^{q},V_{\Psi_{n}(\sigma_{k,N_{k}}(q))})$ 
\item and in the case $i=B_{k+1}$, $\overline{X}_{k+1}^{B_{k+1}}=X_{k+1}^{1}=z_{k+1}$.
\end{itemize}

And we have
\begin{itemize}
\item if $B_{k+1}\notin\overline{B}_{k+1}^{i}$, then $\#\overline{B}_{k+1}^{i}=\#B_{k+1}^{\sigma_{k,N_{k}}(i)}=A_{k+1}^{\sigma_{k,N_{k}}(i)}=F_{k,G_{k}(\overline{X}_{k}^{i})}^{-1}(W_{\Psi_{k}(\sigma_{k,N_{k}(i)})})$,
\item if $B_{k+1}\in\overline{B}_{k+1}^{i}$,then $\#\overline{B}_{k+1}^{i}=\widehat{F}_{k,G_{k}(\overline{X}_{k}^{i})}^{-1}(W_{\Psi_{k}(\sigma_{k,N_{k}(i)})})$.
\end{itemize}
\end{itemize}
This procedure draw $(\overline{X}_{k}^{i},\overline{A}_{k}^{i},B_{k},\sigma_{k})$
with the density (\ref{eq:cond-SMC}) (in practice, one can get rid
of the simulation of the permutations since they have no influence
on the trajectories we are interested in). We will note $(X_{k}^{i},A_{k}^{i},B_{k},\sigma_{k},k\in\dots)=\Phi((z_{i})_{i\in[P]},\left(V_{\mathbf{u}},W_{\mathbf{u}}\right)_{\mathbf{u}\in(\N^{*})^{[k]},k\geq1},(G_{k})_{1\leq k\leq P})$.

\begin{lemma}\label{lem:maj-pot}If in the procedure above, we replace
$A_{k+1}^{i}=\widehat{F}_{k+1,G_{k}(z_{k})}^{-1}(W_{\Psi_{k}(i)})$
(in the case $\Psi_{k}(i)=(N_{1},1,\dots,1)$) by $\widetilde{A}_{k+1}^{i}=\widehat{F}_{k+1,H_{k}(z_{k})}^{-1}(W_{\Psi_{k}(i)})$
for some function $H_{k}\geq G_{k}$, then we get a different system,
which we will note with $\widetilde{\,}$, 
\[
(\widetilde{X}_{k}^{i},\widetilde{A}_{k}^{i},\widetilde{B}_{k},\widetilde{\sigma}_{k},k\in\dots)=\Phi((z_{i})_{i\in[P]},\left(V_{\mathbf{u}},W_{\mathbf{u}}\right)_{\mathbf{u}\in(\N^{*})^{[k]},k\geq1},(H_{k})_{1\leq k\leq P}))\,,
\]
 such that $\forall k$, $\{X_{k}^{i},1\leq i\leq N_{k}\}\subset\{\widetilde{X}_{k}^{i},1\leq i\leq\widetilde{N}_{k}\}$.
moreover, the descendants of $z_{1},\dots,z_{P}$ at time $P$ are
independent variables.\end{lemma}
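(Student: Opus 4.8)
The plan is to establish the two assertions separately, both resting on the observation that the construction $\Phi$ is \emph{monotone} in the potentials through the inverse-CDF parametrisation, and that distinct sub-populations are built from disjoint blocks of the driving variables $(V_{\mathbf u},W_{\mathbf u})$. The single analytic ingredient I would isolate first is the following monotonicity: for fixed $k$ and $u\in[0;1]$, the maps $g\mapsto F_{k,g}^{-1}(u)$ and $g\mapsto\widehat F_{k,g}^{-1}(u)$ are non-decreasing. Indeed, since $1-f_k(g,0)=g/\Vert G_{k-1}\Vert_\infty$ and the remaining mass is allocated on $\{1,\dots,q_k\}$ without changing the conditional law given a positive value, increasing $g$ only transfers mass from $0$ to the positive integers; hence the cumulative distribution function $j\mapsto F_{k,g}(j)$ is pointwise non-increasing in $g$, and a pointwise-smaller CDF has a pointwise-larger pseudo-inverse. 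The same argument applies to $\widehat f_k(g,\cdot)=f_k(g,\cdot)/(1-f_k(g,0))$.

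I would then prove $\{X_k^i\}\subset\{\widetilde X_k^i\}$ by induction on $k$, carried out on the \emph{genealogical labels} $\Psi_k$ rather than on the running indices $i$, since the latter are shifted by the differing offsets $A_{k+1}^1+\dots+A_{k+1}^{i-1}$. Writing $\mathcal G_k=\{\Psi_k(i):i\in[N_k]\}$ and $\widetilde{\mathcal G}_k$ for the two systems, the induction hypothesis is that $\mathcal G_k\subset\widetilde{\mathcal G}_k$ and that every common label carries the same position in both systems. The base case $k=1$ is immediate because $N_1$ is fixed and both systems set $X_1^1=z_1$ and $X_1^i=M_1'(V_{(i)})$. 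For the step, a common label $\ell=\Psi_k(i)$ carries a common position $x$, hence the same variable $W_\ell$ drives its offspring count; by the monotonicity above and $H_k\ge G_k$ the number of children in the $H$-system is at least that in the $G$-system (using $\widehat F$ on the conditioned label and $F$ elsewhere). The children carry labels $(\ell,1),\dots$ in both systems, and $(\ell,m)$ occupies the same position in both (either $M'(x,V_{(\ell,m)})$, or $z_{k+1}$ when $(\ell,m)$ is the special label); thus every child label of the $G$-system reappears in the $H$-system at the same position, closing the induction and giving the inclusion at level $P$.

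For the independence statement I would describe the descendants of $z_i$ at time $P$ --- the particles at time $P$ descending from the conditioned particle $(B_i,i)$ but distinct from $B_P$ --- as the union of the sub-forests rooted at the \emph{non-special} children of the special particle at level $i$. Denoting the special label at level $i$ by $L_i$, these roots carry labels $(L_i,m)$ with $m\ge 2$, and by construction the sub-forest issued from $(L_i,m)$ is a deterministic function only of those $(V_{\mathbf u},W_{\mathbf u})$ whose index $\mathbf u$ extends $(L_i,m)$. The number of such roots equals $\widehat F_{i+1,H_i(z_i)}^{-1}(W_{L_i})-1$, a function of the single variable $W_{L_i}$. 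I would then verify that, for $i\ne i'$, the index families attached to $i$ and to $i'$ (together with $\{L_i\}$ and $\{L_{i'}\}$) are pairwise disjoint, which holds because any extension of $(L_i,m)$ has a coordinate $\ge 2$ where the all-ones special labels do not, and the families for $i$ and $i'$ already differ at the first coordinate where their special ancestors branch. Since the family $(V_{\mathbf u},W_{\mathbf u})$ is independent, the $P-1$ descendant counts are measurable with respect to mutually independent $\sigma$-algebras, hence independent.

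The main obstacle I anticipate is combinatorial rather than analytic: making the induction airtight even though the integer indices $i$ do not match across the two systems, which is precisely why I would phrase the whole argument through the genealogical labels $\Psi_k$; and, for independence, writing out the disjointness of the label families carefully enough that independence of the counts genuinely reduces to independence of the driving family. By contrast, the monotonicity of the pseudo-inverses is routine once the stochastic ordering of $f_k(g,\cdot)$ in $g$ is stated precisely, which the normalisation $1-f_k(g,0)=g/\Vert G_{k-1}\Vert_\infty$ directly provides.
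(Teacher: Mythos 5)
The paper states this lemma without any proof, so there is nothing to compare your argument against; judged on its own, your proof is correct and is clearly the argument the construction was designed to support: the driving variables $(V_{\mathbf{u}},W_{\mathbf{u}})$ are indexed by genealogical labels precisely so that (i) offspring numbers become monotone deterministic functions of the potential through the pseudo-inverse CDFs evaluated at a shared uniform, and (ii) disjoint sub-forests are measurable with respect to disjoint sub-families of independent driving variables. Your decision to run the induction on the labels $\Psi_k$ rather than on the running indices is exactly the right move, since the integer indices are shifted between the two systems. Two small repairs are needed. First, your monotonicity argument tacitly assumes that the conditional offspring law given a positive value does not depend on $g$; this is not forced by the ratio condition $\widehat{f}_{k}(g,i)/f_{k}(g,i)=\Vert G_{k-1}\Vert_{\infty}/g$ alone, but it does hold for the concrete family $f_{k}(g,0)=1-g/\beta_{k}$, $f_{k}(g,i)=g/(q_{k}\beta_{k})$ used in this section (for which $\widehat{f}_{k}(g,\cdot)$ is uniform on $[q_{k}]$ and $\widehat{F}_{k,g}^{-1}$ is in fact constant in $g$), so you should say explicitly that you are working with that choice. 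Second, in the independence step the phrase ``the first coordinate where their special ancestors branch'' is inaccurate, since the special labels are all-ones and $L_{i}$ is a prefix of $L_{i'}$ for $i<i'$, so they never branch; the correct observation is that every index in the family attached to $z_{i}$ has $(i+1)$-th coordinate at least $2$, while every index in the family attached to $z_{i'}$ with $i'>i$ has $(i+1)$-th coordinate equal to $1$, and the roots $W_{L_{i}}$ carry all-ones labels and hence lie outside every such family. With these two points made precise the proof is complete.
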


Let $\delta>0$. For all $k\in[P]$, let us take $H_{1}=G_{1},\dots,H_{k-1}=G_{k-1}$,
and for $j\geq k$, 
\[
H_{j}(x)=\begin{cases}
sup_{|y-z_{j}|<f^{\circ(j-k)}(\delta)}G_{k}(y) & \mbox{ if }|x-z_{j}|\leq f^{\circ(j-k)}(\delta)\\
G_{k}(y) & \mbox{ otherwise ,}
\end{cases}
\]
and let us note with ~$\widetilde{\,}$~~the corresponding system,
\[
\mbox{meaning }(\widetilde{X}_{k}^{i},\widetilde{A}_{k}^{i},\widetilde{B}_{k},\widetilde{\sigma}_{k},k\in\dots)=\Phi((z_{i})_{i\in[P]},\left(V_{\mathbf{u}},W_{\mathbf{u}}\right)_{\mathbf{u}\in(\N^{*})^{[k]},k\geq1},(H_{k})_{1\leq k\leq P}))\,.
\]
 Let $z'_{1},\dots,z'_{P}$ be such that $z'_{i}\in B_{\delta}(z_{i})$,
$\forall i$. We have 
\[
(X_{k}^{i},A_{k}^{i},B_{k},\sigma_{k},k\in\dots)=\Phi((z'_{i})_{i\in[P]},\left(V_{\mathbf{u}},W_{\mathbf{u}}\right)_{\mathbf{u}\in(\N^{*})^{[k]},k\geq1},(G_{k})_{1\leq k\leq P})\,.
\]
Using the Lemma above and Hypothesis \ref{hyp:1}, we have $N_{P}\leq\widetilde{N}_{P}$.
Let $\Phi'$ be such that 
\[
N_{P}=\Phi'((z_{i})_{i\in[P]},\left(V_{\mathbf{u}},W_{\mathbf{u}}\right)_{\mathbf{u}\in(\N^{*})^{[k]},k\geq1},(H_{k})_{1\leq k\leq P}))\,.
\]

\subsection{Examples}

\subsubsection{Gaussian example}

We draw sequences $(X_{k})_{k\in[P]}$, $(Y_{k})_{k\in[P]}$ such
that: $X_{1}\sim\mathcal{N}(0,1)$, $X_{k+1}=aX_{k}+bV_{k+1}$($a\in]0;1[$),
$Y_{k}=X_{k}+cW_{k}$ with i.i.d. variables $V_{k},W_{k}$ of law
$\mathcal{N}(0,1)$. We set 
\[
G_{k}(x)=\frac{1}{\sqrt{2\pi c^{2}}}\exp\left(-\frac{1}{2c^{2}}(x-Y_{k})^{2}\right),
\]
 $M_{1}(dx)=\frac{1}{\sqrt{2\pi}}e^{-x^{2}/2}dx$, $M(x,dy)=\frac{1}{\sqrt{2\pi b^{2}}}\exp\left(-\frac{(y-ax)^{2}}{2b^{2}}\right)dy$.
We want to bound. at time $P$, the particles descending from a fixed
trajectory. The descendants of different $z_{k}$ are independant
so we look, for all $k$, at which is the $z_{k}$ spawning the most
descendants at time $P$. Using the result above, we slice $E$ in
balls of size $\delta$. If $z_{k}'$ is in a ball of size $\delta$
containing $z_{k}$, the number of descendants of $z'_{k}$ at time
$P$ computed with potentials $G_{.}$ is bounded by the number of
descendants of $z_{k}$ at time $P$ computed with potentials $H_{.}$.
The potentials $G_{k}$ going to $0$ at $\pm\infty$, we do not have
to explore the whole of $\R$, as soon as $z_{k}$ is far enough from
$Y_{k}$ so that it has $0$ children under potential $H_{k}$, we
can stop the exploration.
\begin{rem}
With $\delta=0$ (or $\delta$ very small), if we look at the number
of descendants at time $P$ of an individual at time $k$ and we maximise
in the position of the individual, we will finite some finite quantity
(not exploding when $P-k\rightarrow+\infty$. For the maximisation
step, we have to take $\delta>0$ and then this maximum explodes (slowly).
So, there a balance to find between $\delta$ small (maximisation
step takes a lot of time) and $\delta$ big (explosion in the number
of particles). A rule of thumb, coming from the experience, is that
the population do not explode as long as the number of children per
individual is of order $2,3$. 
\end{rem}

\subsubsection{Directed polymers}

Let $(X_{k})_{k\geq1}$ be a symmetric simple random walk in $\Z$
with $X_{1}=0$. We are given i.i.d. variables $(\xi_{k,i})_{k\geq1,i\in\Z}$
with Bernoulli law of parameter $p>0$. We set (random) potentials
: $V_{k}(i)=\exp(-\beta\xi_{k,i})$ ($\beta>0$) and we are interested
in the following law (quenched, meaning the $\xi_{k,i}$ are fixed)
: 
\[
\pi_{1:k}(f)=\frac{\E_{\xi}(f(X_{1:k})\prod_{i=1}^{k}V_{i}(X_{i}))}{\E_{\xi}(\prod_{i=1}^{k}V_{i}(X_{i}))}\,.
\]
This kind of model is described in \cite{bezerra-tindel-viens-2008}.
If we take the expectation over all the variable: $\E(\mbox{max de la traj. sous }\pi_{1:n})$
behaves as $n^{\zeta}$ with $\zeta\neq1/2$.

Using our algorithm, we can simulate trajectories under the law $\pi_{1:P}$
(for fixed $\xi$, $P\in\N^{*}$). The research of the ancestors having
the biggest number of descendants at time $P$ makes that the computational
cost is $P^{2}.$ Here is the drawing of $\E(\max\dots)$ as a function
of $n$ in a $\log$-$\log$ scale (the blue line has gradient $2/3$,
the green line has gradient $1/2$): 
\begin{figure}[H]
\begin{centering}
\includegraphics[scale=0.7]{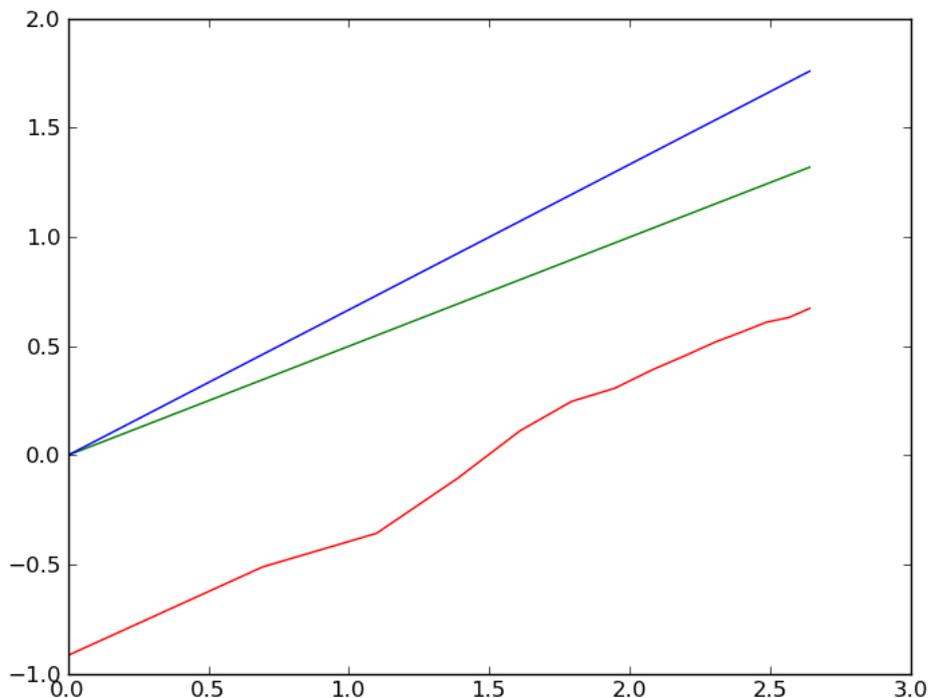}
\par\end{centering}

\caption{gradient estimation (least square)=0,63}

\end{figure}

\bibliographystyle{alpha}
\nocite{*}
\bibliography{simulation-parfaite}

\begin{thebibliography}{ADH10}

\bibitem[ADH10]{andrieu-doucet-holenstein-2010}
Christophe Andrieu, Arnaud Doucet, and Roman Holenstein.
\newblock Particle {M}arkov chain {M}onte {C}arlo methods.
\newblock {\em J. R. Stat. Soc. Ser. B Stat. Methodol.}, 72(3):269--342, 2010.

\bibitem[BTV08]{bezerra-tindel-viens-2008}
S{\'e}rgio Bezerra, Samy Tindel, and Frederi Viens.
\newblock Superdiffusivity for a {B}rownian polymer in a continuous {G}aussian
  environment.
\newblock {\em Ann. Probab.}, 36(5):1642--1675, 2008.

\bibitem[dH09]{den-hollander-2009}
Frank den Hollander.
\newblock {\em Random polymers}, volume 1974 of {\em Lecture Notes in
  Mathematics}.
\newblock Springer-Verlag, Berlin, 2009.
\newblock Lectures from the 37th Probability Summer School held in Saint-Flour,
  2007.

\bibitem[FT98]{foss-tweedie-1998}
S.~G. Foss and R.~L. Tweedie.
\newblock Perfect simulation and backward coupling.
\newblock {\em Comm. Statist. Stochastic Models}, 14(1-2):187--203, 1998.
\newblock Special issue in honor of Marcel F. Neuts.

\bibitem[PW96]{propp-wilson-1996}
James~Gary Propp and David~Bruce Wilson.
\newblock Exact sampling with coupled {M}arkov chains and applications to
  statistical mechanics.
\newblock In {\em Proceedings of the {S}eventh {I}nternational {C}onference on
  {R}andom {S}tructures and {A}lgorithms ({A}tlanta, {GA}, 1995)}, volume~9,
  pages 223--252, 1996.

\end{thebibliography}

\end{document}